\documentclass[11pt]{article}
\usepackage{geometry}
\usepackage[utf8]{inputenc}
\usepackage{amsmath,amsthm,amssymb,color,graphicx,url,hyperref,diagbox,enumerate,cite,tikz,authblk}
\usepackage{appendix}
\usepackage{makecell}

\newtheorem{theorem}{Theorem}

\newtheorem{problem}[theorem]{Problem}

\newtheorem{lemma}[theorem]{Lemma}

\theoremstyle{definition}
\newtheorem{remark}[theorem]{Remark}

\newcommand{\N}{\mathbb{Z}^+}

\newcommand{\R}{\mathbb{R}}

\newcommand{\E}{\mathcal{E}}

\newcommand{\T}{\mathcal{T}}

\newcommand{\tr}{\operatorname{tr}}

\usepackage{graphicx}

\title{An elementary proof of the 2-regularity of Pythagorean triples}
\author{William J. Wesley\thanks{Discrete Mathematics Group, Institute for Basic Science (IBS), Daejeon,
South Korea.\\ This work was supported by the Institute for Basic Science (IBS-R029-C1).}}
\date{\today}

\begin{document}

\maketitle

\section{Introduction}

In the 1980s, Ron Graham offered a \$100 prize to determine whether every 2-coloring of the positive integers must contain a monochromatic Pythagorean triple, that is, a solution to the \emph{Pythagorean equation} $x^2 + y^2 = z^2$ \cite{NaturePythagTriples}. The problem was solved in the affirmative by Heule, Kullmann, and Marek in 2016 \cite{PythagoreanTriplesSAT}, and in fact they proved the stronger result that every 2-coloring of $\{1,...,7825\}$ contains a monochromatic triple, but there exists a coloring of $\{1,...,7824\}$ that does not. Their proof relied on a large Boolean satisfiability (SAT) solving computation that produced a proof certificate nearly 200 terabytes in size. SAT solving has proven effective in determining exact values for other constants in arithmetic Ramsey theory such as Schur, van der Waerden, and Rado numbers \cite{SchurFive,WJW_Rado_ISSAC,BMRS_3ColorSchur,AhmedZamanBright,VDW26,VDW34}, but these results can take significant computational investment to reproduce or verify.  

More generally, an equation $\E$ is \emph{$k$-regular} if every $k$-coloring of the positive integers contains a monochromatic solution to $\E$ and \emph{regular} if it is $k$-regular for all $k$. Some authors use the term \emph{partition regular} instead of \emph{regular}. For linear homogeneous equations $\E$,  Rado's theorem \cite{RadoThesis} gives precise conditions for when $\E$ is regular, but the regularity of higher degree polynomial equations and other types of equations is not understood as well. It is still open whether the Pythagorean equation is 3-regular, and determining whether it is regular is a challenging open problem which was also asked by Graham \cite{GrahamRamseyOpenProblems}. Frantzikinakis, Klurman, and Moreira recently made progress by showing that Pythagorean \emph{pairs} are regular, meaning for every $k$-coloring of the positive integers, there are two distinct integers $x,y$ such that $x$ and $y$ appear in the same Pythagorean triple and $x$ and $y$ share the same color  \cite{PythagPairsPartitionRegular}. Another related result by Chow, Lindqvist, and Prendiville shows that the equation $x_1^2 + x_2^2 + x_3^2+x_4^2 = x_5^2$ is regular and gives a generalization of Rado's theorem for sums of $k$-th powers \cite{ChowLindqvistPrendiville}. Both these results require lengthy and non-elementary analytic arguments. A more detailed survey of the regularity of homogeneous quadratic equations and related problems is given in \cite{frantzikinakis2025partitionregularityhomogeneousquadratics}.

The purpose of this note is to answer Graham's original question with an elementary proof that is free of heavy computation. Specifically, we will give a new proof of the following statement. 

\begin{theorem}\label{ThmMain}
    There exists a positive integer $N$ such that every $2$-coloring of $\{1,\dots,N\}$ contains a monochromatic solution to $x^2+ y^2 = z^2$.
\end{theorem}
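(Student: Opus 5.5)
The plan is to reduce Theorem~\ref{ThmMain} to a finite, hand-checkable non-2-colorability statement about a structured family of Pythagorean triples. Two symmetries make this feasible. The first is \emph{scaling}: if $(a,b,c)$ is a triple then so is $(ka,kb,kc)$. The second is the \emph{multiplicative structure of hypotenuses}: if $c$ is a hypotenuse, then so is every multiple of $c$, and products of primes $\equiv 1 \pmod 4$ are hypotenuses in many ways.

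I will work entirely inside a multiplicatively generated set
\[
X=\{2^{a}3^{b}5^{c}13^{d}\cdots\}
\]
built from a few small primes. Triples between elements of $X$ come from dilating a short list of primitive triples, for instance $(3,4,5)$, $(5,12,13)$, $(8,15,17)$, $(7,24,25)$, $(20,21,29)$, restricted to those whose entries factor over the chosen primes. A 2-coloring of $\{1,\dots,N\}$ then induces a 2-coloring of a finite box of exponent vectors. Each primitive triple becomes a "translation-invariant" constraint on that lattice: no monochromatic translate of a fixed 3-point pattern.

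The first step is to build small \emph{gadgets}. These are configurations of a few triples forcing implications such as "if $x$ and $y$ share a colour then $z$ has the other colour", or, ideally, "$x$ and $y$ must have different colours". I would search by hand for configurations where two triples share two entries up to scaling. An example is the chain $3\to 4\to 5$, $5\to 12\to 13$, rescaled so that $5k$ is simultaneously a hypotenuse and a leg. The goal is to obtain an inequality relation of the form $\chi(x)\neq\chi(\lambda x)$ for a fixed ratio $\lambda$, valid for all $x$ in a large range. Such relations compose multiplicatively. If I can produce ratios $\lambda_1,\dots,\lambda_r$ with forced "different colour" relations and with $\lambda_1^{e_1}\cdots\lambda_r^{e_r}=1$ where $\sum e_i$ is odd, the result is an odd cycle in a 2-colourable graph, which is a contradiction. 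Taking $N$ large enough to contain every element used then finishes the proof.

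If direct gadgets only give conditional implications rather than unconditional inequalities, the fallback is a pigeonhole/compactness step. I would consider the colour patterns of the dilates $kS$ of a fixed finite set $S\subset X$. Since there are only $2^{|S|}$ patterns, there are scales $k<k'$, with $k'/k$ in a controlled set, on which the pattern repeats. I would then use cross-scale triples between $kS$ and $k'S$ to close the contradiction. This is a finite Ramsey argument and still elementary, at the cost of a worse (but explicit) $N$.

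The main obstacle is the first step: finding gadgets that yield an \emph{unconditional} forced relation, rather than a disjunction, with few enough triples to verify by hand. Heule--Kullmann--Marek's threshold of $7825$ shows that the natural small configurations \emph{are} 2-colourable, so the argument must exploit scaling heavily and cannot work with any small initial segment. I would first try exhaustive reasoning on the two-prime lattice generated by $(3,4,5)$ and $(5,12,13)$, that is numbers $2^a3^b5^c13^d$, to see whether an odd cycle of forced inequalities already appears there. I would add $(8,15,17)$ or $(20,21,29)$ only if needed.
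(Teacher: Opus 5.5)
\textbf{There is a genuine gap.} Your proposal is a search plan, not a proof. The step that carries all the content is never carried out: producing gadgets that force an unconditional relation $\chi(x)\neq\chi(\lambda x)$, then closing an odd cycle of such ratios. Moreover, a forced relation of this kind is itself a non-2-colorability statement about a finite configuration, so it has the same type as the theorem.

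Worse, the concrete instances you propose provably cannot work. Take $\chi(n)=(-1)^{v_3(n)}$.
\begin{itemize}
\item Every primitive triple has exactly one entry divisible by $3$, a leg.
\item In each of $(3,4,5)$, $(5,12,13)$, $(8,15,17)$, $(7,24,25)$ and $(20,21,29)$, that leg has $3$-adic valuation exactly $1$. So its colour is $-1$ while the other two entries are $+1$.
\item Since $\chi$ is completely multiplicative, passing to $(ka,kb,kc)$ multiplies all three colours by $\chi(k)$.
\end{itemize}
Thus this single colouring of all of $\N$ has no monochromatic dilate of any of your triples. No gadget, odd cycle of forced inequalities, or compactness argument built from them can ever reach a contradiction, for any $N$. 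In particular your first test case, the $\{2,3,5,13\}$-smooth numbers with $(3,4,5)$ and $(5,12,13)$, is 2-coloured by the parity of the exponent of $3$.

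A successful family must contain triples such as $(9,40,41)$. More generally, it must defeat every completely multiplicative $\pm1$ colouring of the relevant smooth numbers. This is why the paper needs all primes up to $41$ and $68$ primitive triples.

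Your pigeonhole fallback cannot rescue this, because the obstruction is an honest colouring of all of $\N$. Even with a better family, you never say which cross-scale triples would give the contradiction. For a multiplicative colouring, the pattern on $kS$ equals the pattern on $S$ up to a global sign at every scale, so repetition by itself carries no information.

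The idea you are missing is how the paper turns scaling into a finite certificate that rules out all colourings at once, instead of deriving forced relations.
\begin{itemize}
\item Averaging $x(k)^TAx(k)$ over the F{\o}lner box $F_L$ of $P$-smooth multipliers makes each correlation $C(s_i,s_j)$ depend only on the reduced pair $(s_i/g_{ij},\,s_j/g_{ij})$, up to an error of $2\Omega(g_{ij})/L$ (Lemma~\ref{LemmaCbound}).
\item $A$ is positive semidefinite, and its off-diagonal weight $W(u,v)$ sits only on pairs from triples of $\T$, equally on the three pairs of each triple.
\item This turns the identity $\chi(ka)\chi(kb)+\chi(ka)\chi(kc)+\chi(kb)\chi(kc)=-1$ into $0\le E\le 4H/L-\tau$, which is negative for large $L$.
\end{itemize}
For a completely multiplicative colouring this gives $E=-\tau<0$ exactly, so the certificate automatically kills the kind of colouring that defeats your gadget families. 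The matrix $A$ is found by semidefinite programming and verified exactly via $BB^T+R=10^8A$ with $R$ diagonally dominant. It is a global weighted combination of all triples simultaneously, which local gadget-chasing does not supply.
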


Of course, this result is weaker than the result in \cite{PythagoreanTriplesSAT}. The value of $N$ that emerges from our proof is astronomically larger than the optimal value 7825, and we do not expect that our method can produce an $N$ anywhere close to the optimum.  

Our proof combines several existing ideas in a new way. The one computational step of the proof requires verification of properties of a particular matrix $A$. The matrix $A$ was found by semidefinite programming techniques, which have been used in computational arithmetic Ramsey theory to minimize the asymptotic number of monochromatic solutions to certain linear equations in \cite{ParriloRobertsonSaracino,DeLoeraVenturaWangWesley,Robertson2DSchur}. Our setup here is slightly different, and the verification can be done quickly (in under a minute) on a laptop. The theoretical ideas in the proof draw from other work on equation regularity. In particular, we implicitly use the notion of F{\o}lner sets that have been studied in, for instance, \cite{PythagPairsPartitionRegular,NewResultsMultAddRamsey}. However, our presentation of the proof is self-contained and requires only linear algebra, elementary number theory, and standard counting methods. The main part of the proof is given in Section \ref{SectionMainProof}, and the computational details are discussed in Section \ref{Section_A_computations}. All computational data for this work is available at \cite{WJWPythagGithub}. 

\begin{remark}
    Another preprint giving a new proof of Theorem \ref{ThmMain} was posted by Leng days before this one \cite{LengPythagTriples}. The proof there has a different flavor and uses analytic and ergodic theoretic tools. This work was carried out independently, without knowledge of \cite{LengPythagTriples}.    
\end{remark}

\section{Proof}\label{SectionMainProof}

We let $\chi : \N \to \{-1,1\}$ be a 2-coloring of the positive integers and suppose there are no monochromatic Pythagorean triples. It follows that for all positive integers $k$ and Pythagorean triples $(a,b,c)$ that 
\begin{equation}
    \chi(ka)\chi(kb) + \chi(ka)\chi(kc) + \chi(kb)\chi(kc) = -1.
\end{equation}

Let $\T$ be a particular collection of 68 primitive Pythagorean triples generated by the following steps. First, take the set of unordered Pythagorean triples $$U :=\{\{m^2-n^2,2mn,m^2+n^2\}: m-n \text{ odd},\ \gcd(m,n) =1,\  1 \le n < m \le 200\}.$$ For each triple in $U$, write the elements in increasing order $a < b < c$, and add the triple $(a,b,c)$ to $\T$ if the largest prime factor of $abc$ is at most 41. Let $$S = \{ dr: r \text{ belongs to some triple in } \T, 1\le d \le 8 \}.$$ We have $|S| = 1068$, and write the entries of $S$ in increasing order $s_1 < s_2 < \dots < s_{1068}$, and set $g_{ij} = \gcd(s_i,s_j)$. Let $P$ be the set of primes that are prime factors of some integer in $S$, and by the construction of $\T$, we see that $P$ is the set of primes that are at most 41.   

The key to the proof is the existence of a particular $|S| \times |S|$ matrix $A$ with nice properties. The entries of $A$, along with the sets $\T$ and $S$, are available at \cite{WJWPythagGithub}. For coprime positive integers $u,v$, let 

$$W(u,v) = \sum_{i < j, \\s_i/g_{ij} =  u,\\s_j/g_{ij} = v} A_{ij}.$$ Each row $i$ and column $j$ in $A$ corresponds to the elements $s_i$ and $s_j$ in $S$. The value $W(u,v)$ is the total weight of all entries of $A$ where $s_i$ and $s_j$ are in the same ratio as $u$ and $v$, i.e., $s_j/s_i = v/u$. Let $$Q = \{(u,v): u < v, s_i/g_{ij} = u, s_{j}/g_{ij} = v \text{ for some }s_i,s_j \in S\}.$$ The matrix $A$ is constructed so that for all $(u,v) \in Q$, if $u$ and $v$ do not appear together in any triple in $\T$, then $W(u,v) = 0$. Otherwise, they do appear together in some triple $t = (a,b,c)$, and $W(a,b) = W(a,c) = W(b,c)$. We call this common weight $K_t$.  

\begin{lemma}\label{Lemma_A_properties}
    The matrix $A$ satisfies the following.
    \begin{enumerate}[(i)]
        \item $A$ is positive semidefinite.
        \item For any triple $t = (a,b,c) \in \T$, we have $W(a,b) = W(a,c) = W(b,c) =: K_t$.
        \item If $u$ and $v$ do not appear together in any triple in $\T$, then $W(u,v) = 0$.
        \item $\tau:=2\sum_{t\in \T}K_t - \tr(A) > 0.$
    \end{enumerate}
\end{lemma}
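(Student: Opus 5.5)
This lemma is a finite statement about one explicit $1068\times 1068$ matrix $A$, so the plan is a certified computational check. To avoid any floating-point doubt, all entries of $A$ should be exact rationals. If the semidefinite program returns floating-point data, I would first round it to rationals. I would then repair the rounding so that the linear constraints (ii)--(iii) hold exactly. Concretely, I would project the rounded matrix onto the affine subspace cut out by these linear equalities in the entries $A_{ij}$, and keep $A$ symmetric throughout.

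Properties (ii), (iii) and (iv) are then direct bookkeeping in exact rational arithmetic. For every pair $i<j$ I compute $g_{ij}=\gcd(s_i,s_j)$ and the reduced pair $(u,v)=(s_i/g_{ij},s_j/g_{ij})$. I add $A_{ij}$ into a dictionary indexed by $(u,v)$, which yields $W(u,v)$ for every $(u,v)\in Q$. Next I mark every pair $(u,v)\in Q$ that occurs as two coordinates of some triple in $\T$. Condition (iii) is checked by confirming $W(u,v)=0$ for all unmarked pairs. Condition (ii) is checked by comparing the three values $W(a,b),W(a,c),W(b,c)$ for each of the 68 triples $t=(a,b,c)$. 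Since each $t$ is primitive, each of these pairs is coprime, so it is a legitimate key. Finally I sum the resulting $K_t$ and subtract $\tr(A)$, and check that $\tau>0$ exactly.

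The main obstacle is (i), positive semidefiniteness, which cannot be certified by a floating-point eigenvalue computation alone. My first attempt would be to design the SDP so that its solution is strictly feasible: $A$ minus a small multiple of the identity is still PSD, and $\tau$ has slack. After rational rounding and the exact projection, I would compute an exact $LDL^{T}$ (rational Cholesky) factorization of $A$ and check that every pivot is nonnegative. This is a symmetric Gaussian elimination over $\mathbb{Q}$ and is feasible at this size.

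If the exact factorization proves too slow, I would use a fallback. Compute a floating-point Cholesky factor $L$ of $A-\epsilon I$ and round it to rationals $\tilde L$. Then show that $E=A-\tilde L\tilde L^{T}$ has $\lambda_{\min}(E)\ge -\|E\|$, where $\|E\|$ is bounded exactly by the maximum absolute row sum. It then suffices that this bound is small enough, so that $A=\tilde L\tilde L^{T}+E\succeq 0$ follows from Weyl's inequality. Alternatively, one can take $A:=\tilde L\tilde L^{T}$ outright, which makes it PSD by construction. In that case the exact equalities (ii)--(iii) must instead be enforced by choosing the factor appropriately, which is harder. So I would prefer the rounding, exact projection and exact $LDL^{T}$ route, and rely on the slack in $\tau$ to absorb the perturbation.
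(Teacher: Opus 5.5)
For (ii)--(iv) your plan is the paper's: one pass over the entries of $A$ to accumulate $W(u,v)$, then exact comparisons. Your explicit rounding-and-projection step is a more careful version of the paper's one-line remark that $A$ was rescaled to integer entries.

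For (i) your primary route is genuinely different. You certify definiteness by an exact rational $LDL^T$ elimination with positive pivots. If a zero pivot appears you would also have to check that the rest of its column vanishes, but the strict feasibility you build in rules this out.

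The paper instead publishes integer matrices $B,R$ with $BB^T+R=10^8A$ and $R_{ii}\ge\sum_{j\ne i}|R_{ij}|$. It then gets $x^TAx\ge 10^{-8}\bigl(\|B^Tx\|^2+\sum_{i<j}|R_{ij}|(x_i+\operatorname{sgn}(R_{ij})x_j)^2\bigr)\ge 0$ by elementary grouping of terms.

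What each route buys:
\begin{itemize}
\item Your route is an exact decision procedure that only needs $A\succ 0$. But it works with rationals whose denominators are leading principal minors of a $1068\times1068$ matrix, typically thousands of digits long. That makes it considerably heavier to run and to re-verify, and any certificate $(L,D)$ is huge.
\item The paper's certificate has moderate integer entries and is checked quickly by an integer matrix product and row sums. The deduction of semidefiniteness is a few lines of algebra, which fits the paper's goal of an elementary, easily reproduced proof.
\item The paper's price is that it needs a spectral margin, which is why it factors $A-10^5I$ rather than $A$. Such a margin costs almost nothing here: the diagonal of $A$ enters neither (ii) nor (iii), so replacing $A$ by $A+\epsilon I$ only lowers $\tau$ by $\epsilon|S|$.
\end{itemize}

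Your fallback, however, does not work as written, and its corrected form is exactly the paper's argument. Weyl's inequality gives $\lambda_{\min}(A)\ge\lambda_{\min}(\tilde L\tilde L^T)+\lambda_{\min}(E)\ge -\|E\|$. That is not nonnegativity, however small $\|E\|$ is, because you have no certified positive lower bound on $\lambda_{\min}(\tilde L\tilde L^T)$. Moreover, $\tilde L\tilde L^T\approx A-\epsilon I$ forces $E\approx \epsilon I$, so $\|E\|\approx\epsilon$ is not small at all.

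What you need is a lower bound on $\lambda_{\min}(E)$ itself, via Gershgorin: check $E_{ii}\ge\sum_{j\ne i}|E_{ij}|$ for every $i$. This holds precisely because $E$ has diagonal about $\epsilon$ and off-diagonal entries that are only rounding errors. It gives $E\succeq 0$, hence $A=\tilde L\tilde L^T+E\succeq 0$. With $\tilde L=10^{-4}B$ and $E=10^{-8}R$, this is the paper's proof.
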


Verifying the four properties in Lemma \ref{Lemma_A_properties} is the only computational step in the proof. This is not difficult, but we will defer the proof of Lemma \ref{Lemma_A_properties} to Section \ref{Section_A_computations}. 

For a fixed positive integer $L$, let $F_L = \{ \prod_{p \in P} p^{e_p} : 0 \le e_p < L\}$. We will set the value of $L$ later. Moreover, let $$C(u,v) = \frac{1}{|F_L|}\sum_{k \in F_L } \chi(ku)\chi(kv).$$

A crucial intermediate result that we need is that $C(u,v)$ does not change too much when $u$ and $v$ are dilated by a factor $g$.
\begin{lemma}\label{LemmaCbound}
    For a positive integer $g$ whose prime factors are all in $P$, let $\Omega(g) = \sum_{p \in P} v_p(g)$, where $v_p(g)$ is the exponent of the largest power of $p$ that divides $g$ (that is, $v_p(g)$ is the $p$-adic valuation of $g$). 

    We have $$|C(u,v)-C(gu,gv)| \le \frac{2\Omega(g)}{L}.$$
\end{lemma}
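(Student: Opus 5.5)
The plan is to reduce to the case where $g$ is a single prime $p \in P$ and then telescope. Write $g = p_1 p_2 \cdots p_m$ with $m = \Omega(g)$ and each $p_i \in P$ (repetition allowed). Set $g_0 = 1$ and $g_r = p_1\cdots p_r$. The triangle inequality gives
$$|C(u,v) - C(gu,gv)| \le \sum_{r=1}^{m} |C(g_{r-1}u, g_{r-1}v) - C(g_r u, g_r v)|.$$
So it suffices to prove the one-step bound $|C(u',v') - C(pu',pv')| \le 2/L$ for every prime $p \in P$ and every pair of positive integers $u',v'$. This is applied with $u' = g_{r-1}u$ and $v' = g_{r-1}v$. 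The bound must therefore hold for arbitrary positive integers $u',v'$, not just the original coprime pair, and the argument below does not use coprimality.

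For the one-step bound, write
$$|F_L|\bigl(C(pu',pv') - C(u',v')\bigr) = \sum_{k \in F_L} \chi(pku')\chi(pkv') - \sum_{k\in F_L}\chi(ku')\chi(kv').$$
In the first sum, substitute $k' = pk$. As $k$ ranges over $F_L$, $k'$ ranges over the shifted set $pF_L$. This is the set of products $\prod_{q\in P} q^{e_q}$ with $0 \le e_q < L$ for $q \neq p$ and $1 \le e_p \le L$. The difference of the two sums is then
$$\sum_{k' \in pF_L \setminus F_L} \chi(k'u')\chi(k'v') - \sum_{k \in F_L\setminus pF_L}\chi(ku')\chi(kv').$$
The two difference sets are the layers $e_p = L$ and $e_p = 0$, respectively. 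Each contains exactly $|F_L|/L = L^{|P|-1}$ elements, and every summand has absolute value $1$. Hence the difference is at most $2|F_L|/L$ in absolute value. Dividing by $|F_L|$ gives $2/L$.

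The main point requiring care is the Følner-type bookkeeping: checking that multiplication by $p$ is a bijection $F_L \to pF_L$ and that $F_L \,\triangle\, pF_L$ has size exactly $2|F_L|/L$. The bijection holds by unique factorization. Beyond that the argument is routine.

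Summing the $m = \Omega(g)$ one-step bounds gives $|C(u,v) - C(gu,gv)| \le 2\Omega(g)/L$.
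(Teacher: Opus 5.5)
Your proof is correct; it differs from the paper's only in how the composite dilation $g$ is handled.

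The paper treats $g$ all at once. It reindexes $C(gu,gv)$ as an average over $gF_L$ and bounds $|C(u,v)-C(gu,gv)|$ by $|F_L \triangle gF_L|/|F_L|$. It then controls this symmetric difference by a union bound over primes: an element is called $p$-bad if its $p$-exponent satisfies one box condition but not the other, and there are at most $2v_p(g)L^{|P|-1}$ such elements for each $p$.

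You instead telescope along a chain $1 = g_0 \mid g_1 \mid \dots \mid g_m = g$ of single-prime dilations. This means you only need the exact count $|F_L \triangle pF_L| = 2L^{|P|-1}$ for one prime $p \in P$, where the two difference sets are simply the layers $e_p = L$ and $e_p = 0$. The price is that the one-step bound must be applied to the dilated, non-coprime pairs $(g_{r-1}u, g_{r-1}v)$. You correctly observe that this is harmless, since $C$ is defined for arbitrary arguments and your one-step argument never uses coprimality.

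What each route buys: yours avoids the slightly fiddly multi-prime bookkeeping, namely checking that every element of $F_L\triangle gF_L$ is $p$-bad for some $p$ and counting per prime. The paper's route isolates a set-level F{\o}lner estimate $|F_L\triangle gF_L|/|F_L|\le 2\Omega(g)/L$ that does not involve $\chi$, $u$, or $v$.

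The two are essentially equivalent. Applying the triangle inequality for symmetric differences along your chain,
$|F_L \triangle gF_L| \le \sum_{r} |g_{r-1}F_L \triangle g_rF_L| = \sum_r |F_L \triangle p_rF_L|$,
recovers exactly the paper's set-level bound.
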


\begin{proof}
 Observe that $|F_L| = L^{|P|}$ since there are $L$ choices of exponent $e_p$ for each $p \in P$. If $x = \displaystyle\prod_{p \in P} p^{e_p}\in F_L \triangle gF_L$, where $\triangle$ denotes the symmetric difference, then there is some prime $p \in P$ such that exactly one of the conditions $0 \le e_p < L$ and $v_p(g) \le e_p < L+v_p(g)$ is violated. In this case, we say that $x$ is \emph{$p$-bad}. For a given prime $p$, there are at most $2v_p(g)L^{|P|-1}$ elements of $F_L \triangle gF_L$ that are $p$-bad, and summing over all primes $p\in P$ gives $\frac{|F_L \triangle gF_L|}{|F_L|} \le \frac{2\Omega(g)}{L}$. 

 

 Then we have $$|C(u,v) - C(gu,gv)| \le \frac{1}{|F_L|}\sum_{k \in F_L \triangle gF_L} |\chi(ku)\chi(kv)| \le \frac{2\Omega(g)}{L}$$
 because each term $\chi(ku)\chi(kv)$ in $C(u,v)$ has absolute value 1.
\end{proof}

Let $x(k) = (\chi(ks_1),\dots, \chi(ks_{1068}))^T$. Since $A$ is positive semidefinite, we have 
 \begin{equation} \label{EqE_nonneg}
     E:=\frac{1}{|F_L|}\sum_{k \in F_L} x(k)^TAx(k) \ge 0. 
 \end{equation}

 Expanding the quadratic form and using the fact that $\chi(ku)^2 = 1$, we have 

 \begin{align*}
     E &= \frac{1}{|F_L|}\sum_{k \in F_L}\left[\tr(A) + 2\sum_{i < j}A_{ij}\chi(ks_i)\chi(ks_j) \right] \\
     &= \tr(A) + \sum_{i < j}2A_{ij}C(s_i,s_j).
 \end{align*}

 By Lemma \ref{LemmaCbound}, we can normalize the arguments of $C$ without changing the value of $E$ too much, and we have 

\begin{align*}
    E \le \tr(A) + \sum_{i < j} 2A_{ij} C(\frac{s_i}{g_{ij}},\frac{s_j}{g_{ij}}) + \frac{4}{L}\sum_{i < j} |A_{ij}|\Omega(g_{ij}).
\end{align*}

Let $H$ be the constant value $\sum_{i < j} |A_{ij}|\Omega(g_{ij})$. We will rearrange the first sum by grouping terms according to the arguments of $C$. We have 

\begin{align*}
    \sum_{i < j} 2A_{ij} C(\frac{s_i}{g_{ij}},\frac{s_j}{g_{ij}}) &= 2\sum_{(u,v) \in Q}C(u,v)\sum_{i < j, s_i/g_{ij} = u,s_j/g_{ij} = v} A_{ij}\\ &= 2\sum_{(u,v) \in Q}C(u,v)W(u,v)
\end{align*}
Lemma \ref{Lemma_A_properties} states that the only $(u,v)$ with nonzero $W(u,v)$ are those that appear in some triple $t$ of $\T$. Each pair of distinct elements of $t$ contributes the same value, $K_t$. Therefore we have

\begin{align}
    E &\le \frac{4H}{L} + \tr(A) + 2\sum_{t=(a,b,c)\in \T} K_t[C(a,b)+C(a,c)+C(b,c)] \nonumber\\ 
    &= \frac{4H}{L} + \tr(A) + 2\sum_{t=(a,b,c)\in \T} K_t\Bigg[\frac{1}{|F_L|}\sum_{k \in F_L}(\chi(ka)\chi(kb)+\chi(ka)\chi(kc)+\chi(kb)\chi(kc))\Bigg] \nonumber\\ 
    &= \frac{4H}{L} + \tr(A) - 2\sum_{t \in \T} K_t \nonumber\\
    &= \frac{4H}{L} - \tau. \label{EqE_ineq}
\end{align}

By Lemma \ref{Lemma_A_properties}, we have $\tau > 0$, and so for sufficiently large $L$ we have $E < 0$. This contradicts \eqref{EqE_nonneg}, so $\chi$ must have a monochromatic solution. We can get an explicit $N$ by carrying out a few more calculations. We have $\tau = 728000000$ and $H = 23071970671176$, so we may take $L = 126770$, the smallest integer that makes the bound in \eqref{EqE_ineq} negative. 
We may take $N = \max(S)\max(F_L)$, the largest integer colored in the sum defining $E$. This value is $$N = 336200 (\prod_{p \in P}p)^{L-1} = 336200\cdot 304250263527210^{126769},$$
and the proof is complete except for the verification of Lemma \ref{Lemma_A_properties}. 

\section{Computational details}\label{Section_A_computations}

This section discusses some of the computational considerations of the proof, and we begin by commenting on the proof of Lemma \ref{Lemma_A_properties}. Parts $(ii)-(iv)$ are straightforward programming exercises, noting we can compute the values of $W(u,v)$ by iterating over the entries of $A$ once. Computer algebra systems such as Sage can check directly that $A$ is positive semidefinite, but this relies on floating point calculations, which can introduce errors. To increase confidence in our computations, we supply integer matrices $B$ and $R$ such that \begin{equation}\label{EqBR_def}BB^T + R = 10^8 A.\end{equation}
Both matrices are available at \cite{WJWPythagGithub}. The matrix $R$ satisfies
\begin{equation}\label{EqR}R_{ii} \ge \sum_{j \neq i} |R_{ij}|
\end{equation}
for all $i$, and this is also simple to verify by computer. One can deduce that $R$, and therefore $A$, is positive semidefinite via the Gershgorin circle theorem and the nonnegative eigenvalue characterization of positive semidefinite matrices. We also give a short, self-contained proof below.

\begin{proof}[Proof of Lemma \ref{Lemma_A_properties} ($i$)]
Observe that $R$ is symmetric because $BB^T$ and $A$ are symmetric. Then by \eqref{EqBR_def}, \eqref{EqR}, and careful grouping of terms, we have for any $x \in \R^{|S|}$ that 

\begin{align*}
x^TAx &= 10^{-8}\bigg(\|B^Tx\|^2 + \sum_{i,j} R_{ij} x_ix_j\bigg) \\
&\ge  10^{-8}\bigg(\|B^Tx\|^2 + \sum_{i}\sum_{j \neq i} |R_{ij}| x_i^2+\sum_{i\neq j} R_{ij} x_ix_j\bigg) \\
&= 10^{-8}\bigg(\|B^Tx\|^2 + \sum_{i < j} |R_{ij}|(x_i +\operatorname{sgn}(R_{ij})x_j)^2\bigg) \\
& \ge 0.
\end{align*}
\end{proof} 

This completes the proof of Theorem \ref{ThmMain}. However, the reader may complain that the matrices $A, B$, and $R$ appear to arise out of nowhere, and we believe it is worthwhile to explain how to find them. 

A suitable matrix $A$ was found via the following semidefinite program, with $\T$ and $S$ the fixed sets in Section \ref{SectionMainProof}.

\begin{align*}
    \min \tr(A)  \quad \text{subject to} \\
    W(a,b) = W(a,c) = W(b,c) = K_t,&&\text{ for all $ t =(a,b,c) \in \T$}, \\ 
    W(u,v) = 0, &&\text{ if $u,v$ do not appear together in any triple in $\T$}, \\
    2\sum_{t\in \T}K_t =1, \\ 
    A \text{ positive semidefinite}.
\end{align*}

A feasible solution to this program satisfies  properties ($i$)-($iii$) in Lemma \ref{Lemma_A_properties} by design. To satisfy property ($iv$), we only need a feasible solution $A$ to have $\tr(A) < 1$, and we do not need to find an optimal solution. We also rescaled $A$ so that it has integer entries, and this rescaled matrix still has all of the properties of Lemma \ref{Lemma_A_properties}. 

To find $B$, we run a Cholesky factorization algorithm on the matrix $A - 10^5 I$ and find a matrix $L$ such that 
$$LL^T \approx A -10^5 I.$$
This can be done quickly (in less than  a second), but we remark that some implementations are slow when the entries of $A-10^5I$ are treated as integers rather than floating-point numbers, so some care should be taken. The $\approx$ symbol is used because this a numerical approximation. Then we let $B$ be the matrix obtained by rounding the entries of $10^4L$ to the nearest integer and set $R = 10^8 A - BB^T$. Note that we factored $A - 10^5I$ rather than $A$ because after rounding, we want ample buffer for $R$ to satisfy \eqref{EqR}.

\section*{Acknowledgments}

We acknowledge that the outline of the proof in this article was found by an advanced LLM after the author prompted it for a non-computational proof of Theorem \ref{ThmMain}. The author reviewed the output, checked its correctness, and then refined the argument. He also added additional context and references that were omitted by the LLM. The author used an LLM only for proofreading and minor edits during the writing process. All computational aspects of the proof were independently verified by the author in Sage. The author takes and claims full responsibility for this result and preparation of this work.   

\bibliography{RamseyCayley}
\bibliographystyle{abbrv}

\end{document}